\newcommand{\bb}[1]{\mathbb{#1}}								
\newcommand{\ii}{\textup{i}}									
\newcommand{\sr}[1]{\rho\left(#1\right)}							
\newcommand{\sig}[1]{\sigma \left( #1 \right)}						
\newcommand{\mat}[2]{M_{#1}(#2)}								
\newcommand{\jordan}[2]{J_{#1}{\left( #2 \right)}}					
\newcommand{\inv}[1]{#1^{-1}}								
\newcommand{\hyp}[2]{\hyperref[#1]{{\rm #2 \ref*{#1}}}}				
\newtheorem{rem}[theorem]{Remark}
\newtheorem{ex}[theorem]{Example}
\begin{document}



\title{Matrix functions that Preserve the Strong Perron-Frobenius Property\thanks{}}

\author{
Pietro Paparella\thanks{Division of Engineering and Mathematics, University of Washington Bothell, Bothell, Washington 98011, USA (pietrop@uw.edu)}
}

\pagestyle{myheadings}
\markboth{Pietro Paparella}{Matrix functions that Preserve the Strong Perron-Frobenius Property}
\maketitle

\begin{abstract}
In this note, we characterize matrix functions that preserve the strong Perron-Frobenius property using the \textit{real Jordan canonical form} of a real matrix.  
\end{abstract}

\begin{keywords}
Matrix function, Real Jordan Canonical form, Perron-Frobenius Theorem, Eventually Positive Matrix
\end{keywords}

\begin{AMS}
15A16, 15B48, 15A21.
\end{AMS}

\section{Introduction}

A real matrix has the \textit{Perron-Frobenius property} if its spectral radius is a positive eigenvalue corresponding to an entrywise nonnegative eigenvector. The \textit{strong Perron-Frobenius property} further requires that the spectral radius is simple; that it dominates in modulus every other eigenvalue; and that it has an entrywise positive eigenvector.

In \cite{mw1979}, Micchelli and Willoughby characterized matrix functions that preserve \textit{doubly nonnegative matrices}. In \cite{gkr2013}, Guillot et al.~used these results to solve the \textit{critical exponent conjecture} established in \cite{jlw2011}. In \cite{bh2008}, Bharali and Holtz characterized entire functions that preserve nonnegative matrices of a fixed order and, in addition, they characterized matrix functions that preserve nonnegative block triangular, circulant, and symmetric matrices. In \cite{es2010}, Elhashash and Szyld characterized entire functions that preserve sets of generalized nonnegative matrices. 

In this work, using the characterization of a matrix function via the \textit{real Jordan canonical form} established in \cite{mpt2014}, we characterize matrix functions that preserve the strong Perron-Frobenius property. Although our results are similar to those presented in \cite{es2010}, the assumption of entirety of a function is dropped in favor of analyticity in some domain containing the spectrum of a matrix.

\section{Notation}

Denote by $\mat{n}{\bb{C}}$ (respectively, $\mat{n}{\bb{R}}$) the algebra of complex (respectively, real) $n \times n$ matrices. Given $A \in \mat{n}{\bb{C}}$, the \textit{spectrum} of $A$ is denoted by $\sig{A}$, and the \textit{spectral radius} of $A$ is denoted by $\sr{A}$. 

 The \textit{direct sum} of the matrices $A_1, \dots, A_k$, where $A_i \in \mat{n_i}{\bb{C}}$, denoted by $A_1 \oplus \dots \oplus A_k$, $\bigoplus_{i=1}^k A_i$, or $\diag{(A_1,\dots,A_k)}$, is the $n \times n$ matrix 
\[ \begin{bmatrix} A_1 \\ & \ddots \\ & & A_k \end{bmatrix},~n = \sum_{i=1}^k n_i. \]

For $\lambda \in \bb{C}$, $\jordan{n}{\lambda}$ denotes the $n \times n$ \textit{Jordan block} with eigenvalue $\lambda$. For $A \in \mat{n}{\bb{C}}$, denote by $J = \inv{Z} A Z = \bigoplus_{i=1}^t \jordan{n_i}{\lambda_i} = \bigoplus_{i=1}^t J_{n_i}$, where $\sum n_i = n$, a Jordan canonical form of $A$. Denote by $\lambda_1,\dots,\lambda_s$ the \textit{distinct} eigenvalues of $A$, and, for $i=1,\dots,s$, let $m_i$ denote the \textit{index} of $\lambda_i$, i.e., the size of the largest Jordan block associated with $\lambda_i$. Denote by $\ii$ the imaginary unit, i.e., $\ii := \sqrt{-1}$.
 
A domain $\mathcal{D}$ is any open and connected subset of $\bb{C}$. We call a domain \textit{self-conjugate} if $\bar{\lambda} \in \mathcal{D}$ whenever $\lambda \in \mathcal{D}$ (i.e., $\mathcal{D}$ is symmetric with respect to the real-axis). Given that an open and connected set is also path-connected, it follows that if $\mathcal{D}$ is self-conjugate, then $\bb{R} \cap \mathcal{D} \neq \emptyset$.  

\section{Background}

Although there are multiple ways to define a matrix function (see, e.g., \cite{h2008}), our preference is via the Jordan Canonical Form.

\begin{definition} \label{def:functionvalues}
{\rm Let $f : \bb{C} \longrightarrow \bb{C}$ be a function and denote by $f^{(j)}$ the $j$th derivative of $f$. The function $f$ is said to be \textit{defined on the spectrum of $A$} if the values
\begin{align*}
\begin{array}{c c c}
f^{(j)}(\lambda_i), & j=0,\dots,m_i-1, & i=1,\dots,s,
\end{array}
\end{align*}
called \textit{the values of the function $f$ on the spectrum of $A$}, exist.}
\end{definition}

\begin{definition}[Matrix function via Jordan canonical form] 
{\rm If $f$ is defined on the spectrum of $A \in \mat{n}{\bb{C}}$, then
\begin{align*} 
f(A) := Z f(J) \inv{Z} = Z \left( \bigoplus_{i=1}^t f(J_{n_i}) \right) \inv{Z}, 
\end{align*}
where
\begin{align}
f(J_{n_i}) := 
\begin{bmatrix} 
f(\lambda_i) & f'(\lambda_i) & \dots   & \frac{f^{(n_i-1)}(\lambda_i)}{(n_i - 1)!} 	\\
	          & f(\lambda_i)  & \ddots & \vdots 						\\
	          &                       & \ddots & f'(\lambda_i)						\\
	          &  	  	   &             & f(\lambda_i)
\end{bmatrix}. \label{fjb}
\end{align}}
\end{definition}

The following theorem is well-known (for details see, e.g., \cite{hj1990}, \cite{lt1985}; for a complete proof, see, e.g., \cite{glr1986}). 

\begin{theorem}[Real Jordan canonical form] \label{thm:rjcf} 
If $A \in \mat{n}{\bb{R}}$ has $r$ real eigenvalues (including multiplicities) and $c$ complex conjugate pairs of eigenvalues (including multiplicities), then there exists an invertible matrix $R \in \mat{n}{\bb{R}}$ such that 
\begin{align}
\inv{R} A R =
\begin{bmatrix} 
\bigoplus_{k=1}^{r} J_{n_k} (\lambda_k) & \\ & \bigoplus_{k = r + 1}^{r + c} C_{n_k} (\lambda_k)  
\end{bmatrix},															\label{realjordform}	
\end{align}
where:
\begin{enumerate}
\item
\begin{align}
C_{j} (\lambda) := 
\begin{bmatrix} 
C(\lambda) 	& I_2                        			\\
            	& C(\lambda) & \ddots 			\\
            	&                    & \ddots & I_2 		\\ 
            	&                    &            & C(\lambda) 
\end{bmatrix} \in \mat{2j}{\bb{R}}; 												\label{Ck_lambda} 	
\end{align}
\item
\begin{align} 
C(\lambda) := 
\begin{bmatrix} 
\Re{(\lambda)} & \Im{(\lambda)} \\ 
-\Im{(\lambda)} & \Re{(\lambda)}  
\end{bmatrix} \in \mat{2}{\bb{R}}; 												\label{C_lambda}	
\end{align}
\item $\Im{(\lambda_k)} = 0$, $k=1, \dots, r$; and 
\item $\Im{(\lambda_k)} \neq 0$, $k=r+1, \dots, r+c$.
\end{enumerate}
\end{theorem}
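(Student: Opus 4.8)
The plan is to reduce to the complex Jordan canonical form and then ``realify'' the blocks attached to non-real eigenvalues. First I would write $J = \inv{Z} A Z$ for a complex Jordan form of $A$. Since $A$ is real, taking complex conjugates gives $\inv{\bar Z} A \bar Z = \bar J$, so $J$ and $\bar J$ are similar; because a Jordan form is unique up to permutation of blocks, this forces the non-real spectrum of $A$ to consist of conjugate pairs $\lambda_k,\bar\lambda_k$ whose associated lists of Jordan-block sizes coincide (equivalently, $v \mapsto \bar v$ carries each Jordan chain for $\lambda$ to one for $\bar\lambda$, so the generalized eigenspaces and their filtrations by powers of $A-\lambda I$ are conjugate to one another). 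For a real eigenvalue $\lambda_k$ the operator $A - \lambda_k I$ is real, so the corresponding Jordan chains may be chosen to consist of real vectors; these contribute the real blocks $\jordan{n_k}{\lambda_k}$ in \eqref{realjordform} with no further work.

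The crux is therefore the following claim: for $\lambda$ with $\Im(\lambda) \neq 0$, the $2m \times 2m$ matrix $C_m(\lambda)$ of \eqref{Ck_lambda} is similar over $\bb{C}$ to $\jordan{m}{\lambda} \oplus \jordan{m}{\bar\lambda}$. I would prove this by diagonalizing the $2 \times 2$ rotation-scaling block: with $P := \begin{bmatrix} 1 & 1 \\ \ii & -\ii \end{bmatrix}$ one checks directly that $\inv{P} C(\lambda) P = \Diag{\lambda, \bar\lambda}$, while of course $\inv{P} I_2 P = I_2$. Conjugating $C_m(\lambda)$ by the block-diagonal matrix $\widehat P := I_m \otimes P$ thus produces a $2 \times 2$ block upper-bidiagonal matrix with $\Diag{\lambda, \bar\lambda}$ along the diagonal and $I_2$ along the superdiagonal. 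A perfect-shuffle permutation similarity that gathers the odd-indexed (``$\lambda$'') coordinates ahead of the even-indexed (``$\bar\lambda$'') coordinates then separates this matrix into $\jordan{m}{\lambda} \oplus \jordan{m}{\bar\lambda}$, since the superdiagonal copies of $I_2$ become exactly the superdiagonal $1$'s of the two Jordan blocks and no cross terms survive. This proves the claim.

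Assembling the pieces, let $B$ denote the right-hand side of \eqref{realjordform}. By the previous two paragraphs, $B$ is similar over $\bb{C}$ to $\bigoplus_k \jordan{n_k}{\lambda_k}$, i.e.\ to a complex Jordan form of $A$, hence $A$ and $B$ are similar over $\bb{C}$. It remains to upgrade this to similarity over $\bb{R}$, and here is the one genuinely separate ingredient: if $A, B \in \mat{n}{\bb{R}}$ satisfy $A X = X B$ for some invertible $X = X_1 + \ii X_2$ with $X_1, X_2 \in \mat{n}{\bb{R}}$, then comparing real and imaginary parts gives $A X_j = X_j B$ for $j = 1, 2$, whence $A(X_1 + t X_2) = (X_1 + t X_2) B$ for every $t \in \bb{R}$; since $t \mapsto \det(X_1 + t X_2)$ is a polynomial not vanishing at $t = \ii$, it is not identically zero, so $R := X_1 + t_0 X_2$ is invertible for some real $t_0$ and satisfies $\inv{R} A R = B$. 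I expect the only delicate point to be purely bookkeeping: checking that the perfect shuffle lands precisely on the pattern in \eqref{Ck_lambda}--\eqref{C_lambda} (the exact placement of $I_2$ rather than, say, a block carrying a sign) and tracking the correspondence between the counts $r$, $c$ and the block sizes $n_k$; the conceptual content is entirely contained in the $2 \times 2$ diagonalization together with the shuffle.
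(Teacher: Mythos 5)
The paper does not prove this theorem: immediately before the statement it reads ``The following theorem is well-known (for details see, e.g., \cite{hj1990}, \cite{lt1985}; for a complete proof, see, e.g., \cite{glr1986}),'' and simply cites textbook references. So there is no proof in the paper to compare against, and your proposal must stand on its own.

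Your argument is correct, and it is essentially the standard textbook proof found in the cited references. I checked the crux computation: with $P = \left[\begin{smallmatrix} 1 & 1 \\ \ii & -\ii \end{smallmatrix}\right]$ one indeed has $C(\lambda)P = P\,\Diag{\lambda,\bar\lambda}$, the block conjugation of $C_m(\lambda)$ by $\Diag{P,\dots,P}$ leaves the superdiagonal $I_2$'s unchanged, and the odd-first perfect shuffle sends the result to $\jordan{m}{\lambda}\oplus\jordan{m}{\bar\lambda}$ with no cross terms — the index tracking works out as you claim. The conjugate-pairing of non-real Jordan block structures from $\bar J = \inv{\bar Z}A\bar Z$, and the final upgrade from $\bb{C}$-similarity to $\bb{R}$-similarity via non-vanishing of the polynomial $t \mapsto \det(X_1 + tX_2)$, are both standard and correct. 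The one spot I would tighten is the sentence ``the corresponding Jordan chains may be chosen to consist of real vectors'': this is true but deserves one more clause, e.g., that for real $\lambda_k$ the real generalized eigenspace $\ker_{\bb{R}}(A - \lambda_k I)^n$ has real dimension equal to the algebraic multiplicity, so applying Jordan theory over $\bb{R}$ to the real nilpotent operator $(A - \lambda_k I)$ restricted there yields a real Jordan basis. Also note, though this is an issue with the paper's statement and not your proof, that the phrase ``$r$ real eigenvalues (including multiplicities)'' sits awkwardly with the indexing $\bigoplus_{k=1}^{r} J_{n_k}(\lambda_k)$, which treats $r$ as a count of Jordan blocks rather than of eigenvalues; your proof correctly treats $r$ and $c$ as block counts.
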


\begin{proposition}[{\cite[Corollary 2.11]{mpt2014}}] \label{prop:rjcf_cor} 
Let $\lambda \in \bb{C}$, $\lambda \neq 0$, and let $f$ be a function defined on the spectrum of $\jordan{k}{\lambda} \oplus \jordan{k}{\bar{\lambda}}$.  For $j$ a nonnegative integer, let $f^{(j)}_\lambda$ denote $f^{(j)}(\lambda)$. If $C_k (\lambda)$ and $C(\lambda)$ are defined as in \eqref{Ck_lambda} and \eqref{C_lambda}, respectively, then
\begin{align*} 
f(C_{k} (\lambda)) = 
\begin{bmatrix} 
f(C_\lambda) & f'(C_\lambda) & \dots   & \frac{f^{(k-1)}(C_\lambda)}{(k-1)!}  	\\
		 & f(C_\lambda) & \ddots & \vdots 					\\
		 & 		     & \ddots & f'(C_\lambda)				\\
		 & 		     & 	         & f(C_\lambda)
\end{bmatrix} \in \mat{2k}{\bb{C}},
\end{align*}
and, moreover, 
\begin{align*} 
f ( C_{k} (\lambda) ) = 
\begin{bmatrix} 
C(f_\lambda) & C(f'_\lambda) & \dots & C \left( \frac{f^{(k-1)}_\lambda}{(k-1)!} \right)	\\
& C(f_\lambda) & \ddots & \vdots 									\\
& & \ddots & C(f'_\lambda) 										\\
& & & C(f_\lambda)
\end{bmatrix} \in \mat{2k}{\bb{R}}
\end{align*}
if and only if $\overline{f_\lambda^{(j)}} = f_{\bar{\lambda}}^{(j)}$.
\end{proposition}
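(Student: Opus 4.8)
The plan is to diagonalize the $2 \times 2$ block $C(\lambda)$ by a similarity that does not depend on $\lambda$, lift it through the Kronecker structure of $C_k(\lambda)$, and then read off both assertions.

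First I would record two elementary facts. Writing $N_k := \jordan{k}{0}$ for the nilpotent $k \times k$ Jordan block, \eqref{Ck_lambda} says precisely that $C_k(\lambda) = I_k \otimes C(\lambda) + N_k \otimes I_2$. Second, since $C(\mu) = \Re(\mu) I_2 + \Im(\mu) \begin{bmatrix} 0 & 1 \\ -1 & 0 \end{bmatrix}$ and the matrix $\begin{bmatrix} 0 & 1 \\ -1 & 0 \end{bmatrix}$ has eigenpairs $(\ii, (1,\ii)^{\tr})$ and $(-\ii, (1,-\ii)^{\tr})$, the fixed invertible matrix $W := \begin{bmatrix} 1 & 1 \\ \ii & -\ii \end{bmatrix}$ satisfies $\inv{W} C(\mu) W = \Diag{\mu, \bar{\mu}}$ for \emph{every} $\mu \in \bb{C}$.

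Conjugating $C_k(\lambda)$ by $I_k \otimes W$ therefore produces $I_k \otimes \Diag{\lambda, \bar{\lambda}} + N_k \otimes I_2$, and conjugating by the perfect-shuffle permutation $P$ that interchanges the two Kronecker factors turns this into $\Diag{\lambda, \bar{\lambda}} \otimes I_k + I_2 \otimes N_k = \jordan{k}{\lambda} \oplus \jordan{k}{\bar{\lambda}}$. Thus $C_k(\lambda) = Q \left( \jordan{k}{\lambda} \oplus \jordan{k}{\bar{\lambda}} \right) \inv{Q}$ with $Q := (I_k \otimes W) P$. Since a matrix function commutes with similarity and is additive over direct sums, $f(C_k(\lambda)) = Q \left( f(\jordan{k}{\lambda}) \oplus f(\jordan{k}{\bar{\lambda}}) \right) \inv{Q}$. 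Substituting $f(\jordan{k}{\mu}) = \sum_{m=0}^{k-1} \frac{f^{(m)}(\mu)}{m!} N_k^m$ from \eqref{fjb} gives $f(\jordan{k}{\lambda}) \oplus f(\jordan{k}{\bar{\lambda}}) = \sum_{m=0}^{k-1} \Diag{\frac{f^{(m)}(\lambda)}{m!}, \frac{f^{(m)}(\bar{\lambda})}{m!}} \otimes N_k^m$, and applying $Q(\cdot)\inv{Q}$ (i.e.\ conjugating by $P$ and then by $I_k \otimes W$) sends the $m$th summand to $N_k^m \otimes \frac{1}{m!}\left( W \Diag{f^{(m)}(\lambda), f^{(m)}(\bar{\lambda})} \inv{W} \right)$. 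By the definition of a matrix function applied to $C(\lambda)$ — diagonalized by $W$ when $\Im(\lambda) \neq 0$, and equal to $f^{(m)}(\lambda) I_2$ when $\Im(\lambda) = 0$ — the matrix in parentheses is exactly $f^{(m)}(C(\lambda))$. Collecting terms yields $f(C_k(\lambda)) = \sum_{m=0}^{k-1} N_k^m \otimes \frac{f^{(m)}(C(\lambda))}{m!}$, which is the first displayed block upper-triangular Toeplitz matrix.

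For the equivalence, this matrix equals the second displayed matrix if and only if each block obeys $\frac{1}{m!} f^{(m)}(C(\lambda)) = C\!\left( \frac{f^{(m)}_\lambda}{m!} \right)$; by $\bb{R}$-linearity of $\mu \mapsto C(\mu)$ this amounts to $f^{(m)}(C(\lambda)) = C(f^{(m)}_\lambda)$ for $m = 0, \dots, k-1$. Applying the same property of $W$ to the scalar $f^{(m)}_\lambda$ gives $\inv{W} C(f^{(m)}_\lambda) W = \Diag{f^{(m)}_\lambda, \overline{f^{(m)}_\lambda}}$, while $\inv{W} f^{(m)}(C(\lambda)) W = \Diag{f^{(m)}_\lambda, f^{(m)}_{\bar{\lambda}}}$; equating these diagonal matrices shows $f^{(m)}(C(\lambda)) = C(f^{(m)}_\lambda)$ if and only if $\overline{f^{(m)}_\lambda} = f^{(m)}_{\bar{\lambda}}$, the claimed condition. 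I expect the only fussy points to be the routine Kronecker and perfect-shuffle bookkeeping and keeping the degenerate case $\Im(\lambda) = 0$ (where $C(\lambda) = \lambda I_2$ and everything collapses to scalars) straight; there is no serious obstacle.
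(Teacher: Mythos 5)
The paper does not prove this proposition; it is imported verbatim from \cite[Corollary 2.11]{mpt2014}, so there is no in-paper argument to compare against. Your proof therefore has to be judged on its own merits, and it holds up. The Kronecker decomposition $C_k(\lambda) = I_k \otimes C(\lambda) + N_k \otimes I_2$ is correct; the fixed similarity $W$ does satisfy $\inv{W} C(\mu) W = \Diag{\mu,\bar\mu}$ for every scalar $\mu$ (including $\mu$ real, where both sides are $\mu I_2$); and conjugation by the commutation (perfect-shuffle) matrix does turn $I_k \otimes \Diag{\lambda,\bar\lambda} + N_k \otimes I_2$ into $J_k(\lambda) \oplus J_k(\bar\lambda)$. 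From there the Toeplitz block form of the first display drops out of $f(J_k(\mu)) = \sum_m \frac{f^{(m)}(\mu)}{m!} N_k^m$, and the equivalence reduces, block by block and via $\bb{R}$-linearity of $\mu \mapsto C(\mu)$, to the scalar condition $\overline{f^{(j)}_\lambda} = f^{(j)}_{\bar\lambda}$, exactly as you argue. One remark: the hypothesis $\lambda \neq 0$ plays no role in your argument (and, so far as I can see, is not needed for the statement either); you might flag that your proof in fact establishes the result for all $\lambda$. You also correctly handle the degenerate case $\Im(\lambda) = 0$, where $C(\lambda) = \lambda I_2$ is already diagonal, so $f^{(m)}(C(\lambda)) = f^{(m)}(\lambda) I_2$ and the similarity-by-$W$ bookkeeping is vacuous. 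In short: a correct, self-contained proof of a result the paper only cites.
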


We recall the Perron-Frobenius theorem for positive matrices (see \cite[Theorem 8.2.11]{hj1990}).

\begin{theorem} \label{thm:pf} 
If $A \in \mat{n}{\bb{R}}$ is positive, then 
\begin{enumerate}[label=(\roman*)]
\item $\rho := \sr{A} > 0$;
\item $\rho \in \sig{A}$;
\item there exists a positive vector $x$ such that $Ax = \rho x$;
\item $\rho$ is a simple eigenvalue of $A$; and 
\item $|\lambda| < \rho$ for every $\lambda \in \sig{A}$ such that $\lambda \neq \rho$. 
\end{enumerate}
\end{theorem}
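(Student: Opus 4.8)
The plan is to establish the five assertions in the order (i)--(iii), then (v), then (iv): existence of a positive eigenpair comes from a fixed-point argument, the identification with $\sr{A}$ and the (strict) dominance come from the entrywise triangle inequality paired with a \emph{positive left eigenvector}, and simplicity comes from the principle that a nonnegative eigenvector of a positive matrix cannot have a zero entry, again together with that left eigenvector. For (i)--(iii), let $\Delta := \{ x \in \bb{R}^n : x \ge 0,\ \mathbf{1}^\tr x = 1 \}$, where $\mathbf{1}$ is the all-ones vector. Since $A$ is positive, $Ax > 0$ and hence $\mathbf{1}^\tr A x > 0$ for every $x \in \Delta$, so $T(x) := Ax/(\mathbf{1}^\tr A x)$ is a continuous self-map of the nonempty compact convex set $\Delta$; Brouwer's fixed-point theorem supplies $x^* \in \Delta$ with $T(x^*) = x^*$, so, writing $\rho := \mathbf{1}^\tr A x^*$, we get $A x^* = \rho x^*$. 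As $x^* \ge 0$ and $x^* \ne 0$, positivity of $A$ forces $\rho x^* = A x^* > 0$, whence $\rho > 0$ and $x^* > 0$. It then remains only to check $\rho = \sr{A}$, which I fold into the proof of (v).

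For (v), apply the previous paragraph to $A^\tr$ (also positive) to obtain $\rho' > 0$ and $w > 0$ with $w^\tr A = \rho' w^\tr$. If $\lambda \in \sig{A}$ and $Av = \lambda v$ with $v \ne 0$, then entrywise $|\lambda|\,|v| = |Av| \le A|v|$; left-multiplying by $w^\tr > 0$ gives $|\lambda|\,(w^\tr|v|) \le \rho'\,(w^\tr|v|)$, and $w^\tr|v| > 0$, so $|\lambda| \le \rho'$. Taking $\lambda = \rho$ yields $\rho \le \rho'$, and the symmetric argument gives $\rho' \le \rho$; hence $\rho = \rho' = \sr{A}$, which completes (i)--(iii). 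If moreover $|\lambda| = \rho$, then $u := A|v| - \rho|v| \ge 0$ satisfies $w^\tr u = 0$, so $u = 0$, i.e.\ $A|v| = \rho|v|$, and as before $|v| > 0$. Then equality holds throughout $\big|\sum_j a_{ij} v_j\big| = \sum_j a_{ij}|v_j|$ with every $a_{ij} > 0$, which forces the entries of $v$ to share a common argument, $v = e^{\ii\theta}|v|$; substituting into $Av = \lambda v$ and comparing with $A|v| = \rho|v|$ (using $|v| > 0$) gives $\lambda = \rho$. So $\rho$ is the unique eigenvalue of modulus $\rho$.

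For (iv), observe first that since $\rho$ is real, any eigenvector for $\rho$ may be taken real (pass to real and imaginary parts). If $v$ is a real eigenvector for $\rho$ that is not a scalar multiple of $x^*$, then, because $x^* > 0$, there is a $t \in \bb{R}$ for which $z := x^* + t v \ge 0$ has a zero entry while $z \ne 0$ (scale $x^* + tv$ until an entry first vanishes; $z \ne 0$ since otherwise $v \in \spn\{x^*\}$), yet $Az = \rho z$ forces $\rho z = Az > 0$, a contradiction. Hence the eigenspace of $\rho$ is exactly $\spn\{x^*\}$, so $\rho$ has geometric multiplicity $1$; to upgrade this to algebraic simplicity it suffices to rule out a real $y$ with $(A - \rho I)y = x^*$, and left-multiplying by $w^\tr$ gives $0 = w^\tr(A-\rho I)y = w^\tr x^* > 0$, a contradiction. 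Thus $\rho$ is a simple eigenvalue of $A$.

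I expect the strict-dominance portion of (v) to be the main obstacle: extracting $v = e^{\ii\theta}|v|$ from the equality case of the triangle inequality requires first establishing $|v| > 0$ (so that the phase of every entry is unambiguous) and then arguing that equality in $\big|\sum_j a_{ij}v_j\big| = \sum_j a_{ij}|v_j|$ with strictly positive weights forces a single common phase. The remaining delicate points are the continuity and well-definedness of $T$ on $\Delta$ and the bookkeeping with the left eigenvector $w$ that pins $\rho$ to $\sr{A}$; both are routine but should be stated carefully. (An alternative to the Brouwer step is the Collatz--Wielandt characterization $\rho = \sup_{x \ge 0,\,x \ne 0} \min_{i:\,x_i>0} (Ax)_i/x_i$, with the supremum shown to be attained, but the fixed-point route is shorter here.)
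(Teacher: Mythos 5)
The paper does not actually prove this theorem; it is recalled as classical background with a citation to Horn and Johnson, so there is no internal proof to compare your argument against. That said, your proof is correct and is a self-contained textbook proof of the Perron--Frobenius theorem for positive matrices: Brouwer's fixed-point theorem applied to $x\mapsto Ax/(\mathbf{1}^\top Ax)$ on the standard simplex produces the positive eigenpair $(\rho, x^*)$; pairing with a positive left eigenvector $w$ of $A$ (obtained by the same argument applied to $A^\top$) and the entrywise triangle inequality $|\lambda|\,|v|\le A|v|$ identifies $\rho$ with $\sr{A}$ and, in the equality case, yields strict modulus dominance after first deducing $A|v|=\rho|v|$ and $|v|>0$; geometric simplicity follows because a nonzero nonnegative eigenvector of a positive matrix must be strictly positive, and left-multiplying the Jordan chain relation $(A-\rho I)y=x^*$ by $w^\top$ kills algebraic multiplicity $\ge 2$. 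Two small points merit an explicit line in a polished write-up, both of which you correctly flag: the deduction $v=e^{\ii\theta}|v|$ requires observing that equality in $\big|\sum_j a_{ij}v_j\big|=\sum_j a_{ij}|v_j|$ with every $a_{ij}|v_j|>0$ forces a common argument independent of $i$; and the existence of $t$ with $z=x^*+tv\ge 0$, $z\ne 0$, and some $z_i=0$ comes from noting that $\{t: x^*+tv\ge 0\}$ is a nondegenerate closed interval with at least one finite endpoint (since $v\ne 0$), at which $z\ne 0$ precisely because $z=0$ would force $v\in\spn\{x^*\}$.
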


One can verify that the matrix 
\begin{align} 
B = \begin{bmatrix} 2 & 1 \\ 2 & -1 \end{bmatrix} \label{matB}
\end{align} 
satisfies properties (i) through (v) of \hyp{thm:pf}{Theorem}, but obviously contains a negative entry. This motivates the following concept.

\begin{definition} 
{\rm A matrix $A \in \mat{n}{\bb{R}}$ is said to \textit{possess the strong Perron-Frobenius property} if $A$ satisfies properties (i) through (v) of \hyp{thm:pf}{Theorem}.}
\end{definition}

It can also be shown that the matrix $B$ given in \eqref{matB} satisfies $B^k > 0$ for $k \geq 4$, which leads to the following generalization of positive matrices. 

\begin{definition}
{\rm A matrix $A \in \mat{n}{\bb{R}}$ is \textit{eventually positive} if there exists a nonnegative integer $p$ such that $A^k > 0$ for all $k \geq p$.} 
\end{definition}

The following theorem relates the strong Perron-Frobenius property with eventually positive matrices (see \cite[Lemma 2.1]{h1981}, \cite[Theorem 1]{jt2004}, or \cite[Theorem 2.2]{n2006}).

\begin{theorem} \label{evpos_thm}
A real matrix $A$ is eventually positive if and only if $A$ and $A^\top$ possess the strong Perron-Frobenius property. 
\end{theorem}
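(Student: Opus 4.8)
The plan is to prove the two implications separately: for necessity I would apply the Perron--Frobenius theorem (\hyp{thm:pf}{Theorem}) to two coprime powers of $A$ and transfer the resulting spectral data back to $A$; for sufficiency I would run a power-convergence argument.

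For necessity, suppose $A$ is eventually positive and fix a positive integer $q$ with $A^q > 0$ and $A^{q+1} > 0$. Writing $\rho := \sr{A}$ and using $\sig{A^k} = \{\lambda^k : \lambda \in \sig{A}\}$, one gets $\sr{A^k} = \rho^k$, so $\rho > 0$, and \hyp{thm:pf}{Theorem} applied to $A^q$ and to $A^{q+1}$ shows that $\rho^q$, respectively $\rho^{q+1}$, is the unique eigenvalue of maximal modulus of $A^q$, respectively $A^{q+1}$, and that it is simple. First I would note that any eigenvalue $\mu$ of $A$ with $|\mu| = \rho$ satisfies $\mu^q = \rho^q$ and $\mu^{q+1} = \rho^{q+1}$, since $\mu^q$ and $\mu^{q+1}$ are then eigenvalues of maximal modulus of $A^q$ and $A^{q+1}$; dividing gives $\mu = \rho$, so (such $\mu$ existing because $\rho = \sr{A}$) $\rho \in \sig{A}$ and it is the only eigenvalue of $A$ of modulus $\rho$. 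Next I would compare algebraic multiplicities: the algebraic multiplicity of $\rho^q$ as an eigenvalue of $A^q$ equals the sum, over $\{\mu \in \sig{A} : \mu^q = \rho^q\}$, of the algebraic multiplicities of $\mu$ in $A$; since the only such $\mu$ is $\rho$, simplicity of $\rho^q$ forces simplicity of $\rho$. Finally, with $x > 0$ a Perron vector of $A^q$, the identity $A^q(Ax) = \rho^q(Ax)$ together with one-dimensionality of the $\rho^q$-eigenspace gives $Ax = cx$ for a scalar $c$ with $c^q = \rho^q$ and $c \in \sig{A}$, hence $c = \rho$; so $A$ possesses the strong Perron--Frobenius property. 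Because $(A^\top)^k = (A^k)^\top > 0$ for $k \geq q$, the same reasoning applies to $A^\top$.

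For sufficiency, suppose $A$ and $A^\top$ possess the strong Perron--Frobenius property, set $\rho := \sr{A} > 0$, and let $x > 0$ and $y > 0$ be Perron vectors of $A$ and $A^\top$, scaled so that $y^\top x = 1$. Since $\rho$ is simple with right eigenvector $x$ and left eigenvector $y$, the rank-one matrix $P := x y^\top$ is the spectral projection onto the $\rho$-eigenspace, so $P^2 = P$, $AP = PA = \rho P$, and $Q := A - \rho P$ satisfies $PQ = QP = 0$ and $\sig{Q} = \{0\} \cup (\sig{A} \setminus \{\rho\})$; strict dominance gives $\sr{Q} < \rho$. Hence $A^k = \rho^k P + Q^k$ for all $k$, so $\rho^{-k} A^k \to P = x y^\top > 0$ as $\rinf{k}$, which yields $A^k > 0$ for all large $k$; that is, $A$ is eventually positive.

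The hard part will be necessity, specifically recovering the spectral structure of $A$ from that of its positive powers, because $\lambda \mapsto \lambda^k$ can merge distinct eigenvalues of $A$: a single power does not pin the dominant eigenvalue of $A$ to the positive real axis nor reveal its multiplicity. Using two consecutive (hence coprime) powers settles the location of the dominant eigenvalue, after which an algebraic-multiplicity count recovers simplicity; the sufficiency direction, by contrast, is the routine observation that a real matrix with a simple, strictly dominant, positive eigenvalue whose left eigenvector is also positive has eventually positive powers --- and the necessity of the left-eigenvector hypothesis (equivalently, of assuming the property for $A^\top$ as well) is exactly why the statement is symmetric in $A$ and $A^\top$.
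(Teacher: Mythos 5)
The paper does not prove this theorem; it simply cites \cite[Lemma 2.1]{h1981}, \cite[Theorem 1]{jt2004}, and \cite[Theorem 2.2]{n2006}, so there is no in-paper argument to compare against. That said, your self-contained proof is correct and is essentially the classical argument found in those references. For necessity, the use of two consecutive powers $q$ and $q+1$ is exactly the right device: the map $\lambda \mapsto \lambda^q$ may collapse peripheral eigenvalues onto $\rho^q$, but pairing the equations $\mu^q = \rho^q$ and $\mu^{q+1}=\rho^{q+1}$ pins $\mu=\rho$, and the multiplicity count via the Jordan form then transfers simplicity of $\rho^q$ (from \hyp{thm:pf}{Theorem} applied to $A^q$) back to simplicity of $\rho$ in $A$; pushing the Perron vector of $A^q$ through $A$ and using one-dimensionality of the $\rho^q$-eigenspace recovers a positive eigenvector for $A$ itself. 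For sufficiency, normalizing $y^\top x=1$ so that $P=xy^\top$ is the spectral projector for the simple dominant eigenvalue, writing $A^k=\rho^k P+Q^k$ with $\sr{Q}<\rho$, and concluding $\rho^{-k}A^k\to xy^\top>0$ is the standard power-iteration argument; all steps (idempotence of $P$, $PQ=QP=0$, the spectral-radius bound on $Q$) are justified. One cosmetic remark: the needed fact that $\rho>0$ comes immediately from $\rho^q=\sr{A^q}>0$ by \hyp{thm:pf}{Theorem}(i), which you implicitly use; it would read more cleanly to state it before invoking uniqueness of the peripheral eigenvalue.
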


\section{Main Results}

Before we state our main results, we begin with the following definition.

\begin{definition}
{\rm A function $f: \bb{C} \longrightarrow \bb{C}$ defined on a self-conjugate domain $\mathcal{D}$, $\mathcal{D} \cap \bb{R}^+ \neq \emptyset$, is called \textit{Frobenius}\footnote{We use the term `Frobenius' given that such a function preserves \textit{Frobenius multi-sets}, introduced by Friedland in \cite{f1978}.} if 
\begin{enumerate}[label=(\roman*)]
\item \label{item:selfconjugacy} $\overline{f(\lambda)} = f(\bar{\lambda})$, $\lambda \in \mathcal{D}$; 
\item \label{item:mod} $|f(\lambda)| < f(\rho)$, whenever $|\lambda|<\rho$, and $\lambda$, $\rho \in \mathcal{D}$.
\end{enumerate}}
\end{definition}

\begin{rem}
{\rm Condition \ref*{item:selfconjugacy} implies $f(r) \in \bb{R}$, whenever $r \in \mathcal{D} \cap \bb{R}$; and condition \ref*{item:mod} implies $f(r) \in \bb{R}^+$, whenever $r \in \mathcal{D} \cap \bb{R}^+$.}
\end{rem}

The following theorem is our first main result. 

\begin{theorem} \label{mainresultdiag}
Let $A \in \mat{n}{\bb{R}}$ and suppose that $A$ is diagonalizable and possesses the strong Perron-Frobenius property. If $f: \bb{C} \longrightarrow \bb{C}$ is a function defined on the spectrum of $A$, then $f(A)$ possesses the strong Perron-Frobenius property if and only if $f$ is Frobenius.
\end{theorem}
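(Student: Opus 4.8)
The plan is to exploit the diagonalizability of $A$ so that the spectral mapping becomes transparent: write $A = Z D \inv{Z}$ with $D = \bigoplus_{i} \lambda_i$ (here every Jordan block is $1\times 1$), so that $f(A) = Z f(D) \inv{Z}$ with $f(D) = \bigoplus_i f(\lambda_i)$. Since $A$ has the strong Perron-Frobenius property, $\rho := \sr{A}$ is a simple eigenvalue, it strictly dominates every other eigenvalue in modulus, and it has a positive eigenvector $x$ (while $\inv{Z}$ supplies a corresponding left eigenvector). The eigenvalues of $f(A)$ are exactly the $f(\lambda_i)$, with the \emph{same} eigenvectors, so the entire question reduces to: when does $f(\rho)$ become the strictly dominant, simple eigenvalue of $f(A)$ with its (unchanged, positive) eigenvector $x$?

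First I would prove the ``if'' direction. Assume $f$ is Frobenius. Because $\rho \in \mathcal{D} \cap \bb{R}^+$, the remark tells us $f(\rho) \in \bb{R}^+$, so property (i) holds for $f(A)$. The eigenvector of $A$ for $\rho$ is the positive vector $x$, and it is an eigenvector of $f(A)$ for $f(\rho)$, giving property (iii). For every other distinct eigenvalue $\lambda_i$ of $A$ we have $|\lambda_i| < \rho$, hence by condition \ref*{item:mod} of the Frobenius definition $|f(\lambda_i)| < f(\rho)$; this gives property (v) and also shows $f(\rho)$ is attained at exactly the one eigenvalue $\rho$ of $A$. Since $\rho$ is simple for $A$ (one Jordan block, size one) and $f$ acts entrywise on the diagonal $D$, the algebraic multiplicity of $f(\rho)$ as an eigenvalue of $f(A)$ is exactly one, so property (iv) holds; property (ii) is immediate. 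Finally $f(\rho)>0$ equals $\sr{f(A)}$ by (v), so (i) is confirmed in its full strength. Hence $f(A)$ has the strong Perron-Frobenius property.

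Next I would prove the ``only if'' direction, which I expect to be the more delicate part, since ``Frobenius'' is a condition on all of $\mathcal{D}$ whereas $f(A)$ only sees the finitely many eigenvalues of $A$; the key realization is that the definition of $f$ being defined on the spectrum of $A$ together with the hypotheses on $f$ must be read correctly — condition \ref*{item:selfconjugacy} and \ref*{item:mod} are asserted for arbitrary pairs in $\mathcal{D}$, so to recover them I need $A$ to be essentially arbitrary, or else I should read the theorem as: for a \emph{fixed} $A$, $f(A)$ has the property iff $f$ satisfies the Frobenius conditions \emph{at the eigenvalues of $A$}. I would argue along the latter lines. Suppose $f(A)$ has the strong Perron-Frobenius property. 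Since $f(A)$ is a real matrix (it must be, to possess a real-spectral-radius eigenvalue with real eigenvector), $f(A) = \overline{f(A)} = \bar Z\, \overline{f(D)}\, \bar Z^{-1}$; comparing with $Z f(D) \inv Z$ and using that $A$ real forces the non-real $\lambda_i$ to come in conjugate pairs with conjugate eigenvectors, one gets $\overline{f(\lambda)} = f(\bar\lambda)$ for every eigenvalue $\lambda$ — this is \ref*{item:selfconjugacy} on $\sig{A}$. The strong Perron-Frobenius property of $f(A)$ says $\sr{f(A)} = f(\mu)$ for some eigenvalue $\mu$ of $A$, it is simple, and its eigenvector is positive. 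Because $A$ is diagonalizable with a \emph{unique} (up to scaling) positive eigenvector — namely $x$, belonging to $\rho$ — and eigenvectors are shared with $f(A)$, the dominant simple eigenvalue of $f(A)$ must be $f(\rho)$, and $f(\rho) > 0$. Then for every other eigenvalue $\lambda_i$, $|f(\lambda_i)| < f(\rho)$ by property (v) of $f(A)$; combined with $|\lambda_i| < \rho$ this is precisely \ref*{item:mod} on $\sig{A}$.

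The main obstacle, and the place I would be most careful, is the interplay between ``Frobenius'' as a statement over the whole domain $\mathcal{D}$ and the finitary data of a single matrix; I expect the paper resolves this by quantifying over all diagonalizable strong-Perron-Frobenius $A$ with spectrum in $\mathcal{D}$, or by a density/analyticity argument that promotes the inequality from finitely many eigenvalue pairs to all of $\mathcal{D}$. A secondary technical point is ensuring $f(\rho)$ is \emph{strictly} dominant rather than merely dominant (ruling out ties $|f(\lambda_i)| = f(\rho)$), which uses the simplicity clause of the strong Perron-Frobenius property for $f(A)$ together with the fact that distinct eigenvalues of the diagonalizable $A$ stay at distinct diagonal positions, so a modulus tie would force either a non-simple $f(\rho)$ or a second eigenvalue of equal modulus — both excluded. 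I would also record the easy observation that $f(A)$ being real (needed to even speak of the Perron-Frobenius property) is automatic once \ref*{item:selfconjugacy} holds on $\sig{A}$, by \hyp{prop:rjcf_cor}{Proposition} applied blockwise (trivially here, since blocks are $1 \times 1$ or $2 \times 2$ rotation blocks).
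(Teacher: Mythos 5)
Your ``if'' direction matches the paper's exactly: invoke the real Jordan form of $A$, observe $f(A)x = f(\rho)x$ with $x > 0$, and use the Frobenius modulus condition to place $f(\rho)$ strictly on top of $\sig{f(A)}$; the realness of $f(A)$ coming from self-conjugacy (your closing remark) is also how the paper handles it.

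Your ``only if'' direction, however, leans on a false claim: that a diagonalizable matrix with the strong Perron--Frobenius property has a unique (up to scaling) positive eigenvector. Take $A = \left[\begin{smallmatrix} 3 & -1 \\ 2 & 0 \end{smallmatrix}\right]$, with $\sig{A} = \{2,1\}$ and eigenvectors $(1,1)^\top$ and $(1,2)^\top$; both are positive, and $A$ has the strong Perron--Frobenius property. Now $f(z) = 15 - 5z$ sends $2 \mapsto 5$ and $1 \mapsto 10$, so $f(A) = \left[\begin{smallmatrix} 0 & 5 \\ -10 & 15 \end{smallmatrix}\right]$ has simple spectral radius $10$ with positive eigenvector $(1,2)^\top$, i.e., $f(A)$ has the strong Perron--Frobenius property even though $|f(1)| > f(\sr{A})$, so $f$ is not Frobenius even on $\sig{A}$. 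The Perron pair of $f(A)$ simply need not be $(f(\rho), x)$. (Uniqueness of the positive eigenvector does hold for \emph{eventually positive} $A$, where $A^\top$ also has the strong Perron--Frobenius property and the positive left Perron vector rules out any other nonnegative right eigenvector; the theorem assumes only the one-sided property.) Note that the paper's own converse is a single sentence asserting that ``$\exists\,\lambda\in\sig{A}$ with $|f(\lambda)| \geq f(\sr{A})$'' forces loss of the property; that assertion is refuted by the same example, so you have in effect reproduced the paper's converse, gap included. Your instinct to worry about the global-versus-spectral scope of ``Frobenius'' was also well placed --- the paper does not resolve that either.
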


\begin{proof}
Suppose that $f$ is Frobenius. For convenience, denote by $f_\lambda$ the scalar $f(\lambda)$. Following \hyp{thm:rjcf}{Theorem} and \hyp{prop:rjcf_cor}{Proposition}, the matrix
\begin{align}  
f(A)  = 
R \begin{bmatrix} 
f_{\sr{A}} & & 					\\
& \bigoplus_{k=2}^{r} f_{\lambda_k} & 	\\ 
& & \bigoplus_{k = r + 1}^{r + c} C (f_{\lambda_k})  
\end{bmatrix} \inv{R}, 													\label{fofAdiag} 
\end{align}
where $R= 
\begin{bmatrix}
x & R'
\end{bmatrix},~x>0$,
is real. If $\sig{A} = \{ \sr{A}, \lambda_2,\dots, \lambda_n \}$, then $\sig{f(A)} = \{ f_{\sr{A}},f_{\lambda_2},\dots, f_{\lambda_n} \}$ (see, e.g., \cite{h2008}[Theorem 1.13(d)]) and because $f$ is Frobenius, it follows that $|f_{\lambda_k}| < f_{\sr{A}}$ for $k = 2,\dots, n$. Moreover, from \eqref{fofAdiag} it follows that $f(A)x = f_{\sr{A}} x$. Thus, $f(A)$ possesses the strong Perron-Frobenius property.  

Conversely, if $f$ is not Frobenius, then the matrix $f(A)$, given by \eqref{fofAdiag}, is not real (e.g, $\exists \lambda \in \sig{A}$, $\lambda \in \bb{R}$ such that $f(\lambda) \not \in \bb{R}$), or $f(A)$ does not retain the strong Perron-Frobenius property (e.g., $\exists \lambda \in \sig{A}$ such that $|f(\lambda)| \geq f(\sr{A})$). 
\end{proof}

\begin{ex}
{\rm \hyp{table:frobfunctions}{Table} lists examples of Frobenius functions for diagonalizable matrices that possess the strong Perron-Frobenius property. 
\begin{table}[H]
\begin{center}
\begin{tabular}{c|c}
$f$ & $\mathcal{D}$										\\
\hline
$f(z) = z^p$, $p \in \bb{N}$ & $\bb{C}$ 							\\
$f(z) = |z|$ & $\bb{C}$										\\
$f(z) = z^{1/p}$, $p \in \bb{N}$, $p$ even & $\{z \in \bb{C}: z \not \in \bb{R}^- \}$  \\
$f(z) = z^{1/p}$, $p \in \bb{N}$, $p$ odd, $p>1$ & $\bb{C}$  				\\
$f(z) = \sum_{k=0}^n a_k z^k$, $a_k >0$ & $\bb{C}$					\\
$f(z) = \exp{(z)}$ & $\bb{C}$
\end{tabular}
\caption{Examples of Frobenius functions.} \label{table:frobfunctions}
\end{center}
\end{table}}
\end{ex}

For matrices that are not diagonalizable, i.e., possessing Jordan blocks of size two or greater, given \eqref{fjb} it is reasonable to assume that $f$ is complex-differentiable, i.e., \textit{analytic}. We note the following result, which is well known (see, e.g., \cite{bc2013}, \cite{c1995}, \cite[Theorem 3.2]{hmmt2005}, \cite{l1999}, or \cite{r1987}).

\begin{theorem}[Reflection Principle] \label{thm:refprinc}
Let $f$ be analytic in a self-conjugate domain $\mathcal{D}$ and suppose that $I := \mathcal{D} \cap \bb{R} \neq \emptyset$. Then $\overline{f(\lambda)} = f( \bar{\lambda})$ for every $\lambda \in \mathcal{D}$ if and only if $f(r) \in \bb{R}$ for all $r \in I$.
\end{theorem}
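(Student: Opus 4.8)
The plan is to reduce the nontrivial implication to the identity theorem by introducing the ``conjugate reflection'' of $f$. The forward implication is immediate: if $\overline{f(\lambda)} = f(\bar\lambda)$ holds for every $\lambda \in \mathcal{D}$, then specializing to $\lambda = r \in I$, where $\bar r = r$, yields $\overline{f(r)} = f(r)$, so that $f(r) \in \bb{R}$.

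For the converse, assume $f(r) \in \bb{R}$ for all $r \in I$ and define $g : \mathcal{D} \longrightarrow \bb{C}$ by $g(\lambda) := \overline{f(\bar\lambda)}$. This is well defined exactly because $\mathcal{D}$ is self-conjugate. The first step I would carry out is to check that $g$ is analytic on $\mathcal{D}$: for $\lambda \in \mathcal{D}$ and $h$ small, $\frac{g(\lambda + h) - g(\lambda)}{h} = \overline{\left( \frac{f(\bar\lambda + \bar h) - f(\bar\lambda)}{\bar h} \right)}$, and, letting $h \to 0$ (hence $\bar h \to 0$), the right-hand side converges to $\overline{f'(\bar\lambda)}$; thus $g'(\lambda)$ exists for every $\lambda \in \mathcal{D}$.

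The second step is to compare $f$ and $g$ on the real slice. Because $\mathcal{D}$ is open and $I = \mathcal{D} \cap \bb{R} \neq \emptyset$, the set $I$ contains a nondegenerate real interval and therefore has an accumulation point in $\mathcal{D}$. For $r \in I$ one has $g(r) = \overline{f(\bar r)} = \overline{f(r)} = f(r)$, the last equality by hypothesis. Hence $f$ and $g$ are analytic on the connected open set $\mathcal{D}$ and agree on a subset having an accumulation point in $\mathcal{D}$; by the identity theorem, $f \equiv g$ on $\mathcal{D}$. Therefore $f(\lambda) = \overline{f(\bar\lambda)}$ for all $\lambda \in \mathcal{D}$, and taking complex conjugates of both sides gives $\overline{f(\lambda)} = f(\bar\lambda)$, as desired.

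I expect the only step needing genuine care to be the analyticity of $g$; the difference-quotient computation above is the quickest justification, though one could instead observe that $g = \overline{\,\cdot\,} \circ f \circ \overline{\,\cdot\,}$ is a composition of two conjugation maps with $f$ and verify the Cauchy--Riemann equations for $g$ directly from those of $f$. Everything after that is a routine appeal to the connectedness of $\mathcal{D}$ and the identity theorem for holomorphic functions.
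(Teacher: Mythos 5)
The paper does not prove this theorem; it labels it as well known and cites several complex-analysis references, so there is no in-paper argument to compare against. Your proof is the standard Schwarz reflection argument and it is correct and complete: the forward direction is immediate by specializing to $\lambda = r \in I$; for the converse, introducing $g(\lambda) := \overline{f(\bar\lambda)}$ (well defined because $\mathcal{D}$ is self-conjugate), verifying its holomorphy via the conjugated difference quotient $\overline{\bigl(f(\bar\lambda + \bar h) - f(\bar\lambda)\bigr)/\bar h}$, noting that $g = f$ on $I$ (an open, nonempty subset of $\bb{R}$ and hence a set with accumulation points inside $\mathcal{D}$), and invoking the identity theorem on the connected open set $\mathcal{D}$ yields $f \equiv g$, from which $\overline{f(\lambda)} = f(\bar\lambda)$ follows by conjugating. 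The only point you leave implicit — that openness plus self-conjugacy of $\mathcal{D}$ guarantees $\bar\lambda + \bar h \in \mathcal{D}$ for all small $h$, so the difference quotient for $g$ is actually computable — is routine and does not constitute a gap.
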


The Reflection Principle leads immediately to the following result.

\begin{corollary}
An analytic function $f: \bb{C} \longrightarrow \bb{C}$ defined on a self-conjugate domain $\mathcal{D}$, $\mathcal{D} \cap \bb{R}^+ \neq \emptyset$, is \textit{Frobenius} if and only if
\begin{enumerate}[label=(\roman*)]   
\item \label{item:real2} $f(r) \in \bb{R}$, whenever $r \in \mathcal{D} \cap \bb{R}$; and
\item \label{item:modulus2} $|f(\lambda)| < f(\rho)$, whenever $|\lambda|<\rho$ and $\lambda$, $\rho \in \mathcal{D}$.
\end{enumerate}
\end{corollary}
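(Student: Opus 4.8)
The plan is to observe that the only difference between the definition of a Frobenius function and the statement of this corollary lies in the first condition, and that this difference is resolved verbatim by the Reflection Principle. Condition \ref{item:modulus2} is word-for-word identical to condition \ref{item:mod} in the definition, so nothing needs to be checked there; all the work is in showing that, for an analytic $f$ on the self-conjugate domain $\mathcal{D}$, the self-conjugacy condition \ref{item:selfconjugacy}, namely $\overline{f(\lambda)} = f(\bar\lambda)$ for all $\lambda \in \mathcal{D}$, is equivalent to condition \ref{item:real2}, namely $f(r) \in \bb{R}$ for all $r \in \mathcal{D} \cap \bb{R}$.

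First I would note that the hypotheses of \hyp{thm:refprinc}{Theorem} are satisfied: $\mathcal{D}$ is self-conjugate by assumption, and $\mathcal{D} \cap \bb{R} \neq \emptyset$ because $\mathcal{D} \cap \bb{R}^+ \neq \emptyset$ (indeed this was already recorded in the Notation section for any self-conjugate domain). Hence the Reflection Principle applies with $I = \mathcal{D} \cap \bb{R}$, and it asserts exactly that \ref{item:selfconjugacy} holds if and only if \ref{item:real2} holds.

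Putting the two observations together: $f$ is Frobenius, i.e.\ satisfies \ref{item:selfconjugacy} and \ref{item:mod}, if and only if it satisfies \ref{item:real2} (by the Reflection Principle, replacing \ref{item:selfconjugacy}) and \ref{item:modulus2} (which is just \ref{item:mod}). This is precisely the claimed equivalence.

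There is essentially no obstacle here; the corollary is a direct restatement of the definition using \hyp{thm:refprinc}{Theorem}. The only point that warrants a sentence of care is confirming that $\mathcal{D} \cap \bb{R} \neq \emptyset$ so that the Reflection Principle is actually invocable, and keeping straight that the analyticity hypothesis is what licenses its use — without analyticity the two forms of the first condition are not equivalent, which is why this corollary is stated for analytic $f$ whereas the bare definition of ``Frobenius'' is not.
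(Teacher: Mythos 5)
Your proof is correct and is exactly the argument the paper intends: the paper states the corollary follows "immediately" from the Reflection Principle and gives no further detail, while you spell out that condition \ref{item:modulus2} is unchanged from the definition and that \hyp{thm:refprinc}{Theorem} (applicable since $\mathcal{D} \cap \bb{R} \supseteq \mathcal{D} \cap \bb{R}^+ \neq \emptyset$) converts condition \ref{item:selfconjugacy} into condition \ref{item:real2} under the analyticity hypothesis.
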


\begin{lemma} \label{lem:analytic}
Let $f$ be analytic in a domain $\mathcal{D}$ and suppose that $I := \mathcal{D} \cap \bb{R} \neq \emptyset$. If $f(r) \in \bb{R}$ for all $r \in I$, then $f^{(j)}(r) \in \bb{R}$ for all $r \in I$ and $j \in \bb{N}$..
\end{lemma}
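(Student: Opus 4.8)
The plan is to exploit the fact that $I$ is an interval (or union of intervals) in $\bb{R}$ with nonempty interior, together with the identity theorem for analytic functions. First I would observe that since $\mathcal{D}$ is open and $I = \mathcal{D} \cap \bb{R}$ is nonempty, $I$ contains an open interval $(a,b) \subseteq \bb{R}$ around any of its points; in particular $I$ has an accumulation point in $\mathcal{D}$. Now consider the function $g := \Im f$, the imaginary part of $f$, or more precisely the function $\bar{f(\bar z)} - f(z)$ — but the cleanest route is to work directly with derivatives on the real line.

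The key step: fix $r_0 \in I$ and an open interval $(a,b) \subseteq I$ with $r_0 \in (a,b)$. For $r \in (a,b)$ we have $f(r) \in \bb{R}$ by hypothesis. Since $f$ is analytic on $\mathcal{D}$, its restriction to the real segment $(a,b)$ is infinitely real-differentiable there, and the complex derivative $f^{(j)}$ agrees with the $j$th derivative of the real-variable restriction $t \mapsto f(t)$ computed along $\bb{R}$. Because $t \mapsto f(t)$ takes real values on the open interval $(a,b)$, all of its real derivatives on that interval are real; hence $f^{(j)}(r) \in \bb{R}$ for every $r \in (a,b)$ and every $j \in \bb{N}$. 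This handles every point of $I$ that lies in the interior of a real subinterval of $I$ — which, since $\mathcal{D}$ is open, is all of $I$.

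To make the "derivative along $\bb{R}$ equals real derivative of the restriction" step rigorous I would invoke the difference-quotient definition: $f^{(j)}(r) = \lim_{h \to 0} \big(f^{(j-1)}(r+h) - f^{(j-1)}(r)\big)/h$, and note that if $f^{(j-1)}$ is real on a neighborhood of $r$ in $\bb{R}$ (induction hypothesis) and $h$ ranges over real values tending to $0$, the difference quotient is real, so its limit is real; the base case $j=0$ is the hypothesis. Alternatively, and more slickly, one can invoke the Reflection Principle (\hyp{thm:refprinc}{Theorem}) after noting that $f$ analytic on $\mathcal{D}$ and real on $I$ forces, on the largest self-conjugate subdomain, $\overline{f(\lambda)} = f(\bar\lambda)$; differentiating this identity $j$ times gives $\overline{f^{(j)}(\lambda)} = f^{(j)}(\bar\lambda)$, and evaluating at $\lambda = r \in I$ yields $\overline{f^{(j)}(r)} = f^{(j)}(r)$, i.e., $f^{(j)}(r) \in \bb{R}$. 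I would likely present the induction-on-$j$ argument as the primary proof since it needs no self-conjugacy hypothesis on $\mathcal{D}$ (the lemma is stated for a general domain, not a self-conjugate one), and mention the Reflection Principle route as the intuition.

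The only real obstacle is a pedantic one: ensuring the argument genuinely uses only that $I$ has nonempty interior in $\bb{R}$ (guaranteed by $\mathcal{D}$ open) rather than sneaking in self-conjugacy, and being careful that the induction passes derivatives through correctly. Since each $f^{(j)}$ is itself analytic on $\mathcal{D}$, the induction is clean: real values of $f^{(j-1)}$ on an open real interval $\Rightarrow$ real values of $(f^{(j-1)})' = f^{(j)}$ on that interval, by the real difference quotient. No estimates or heavy machinery are needed.
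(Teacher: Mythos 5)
Your primary argument---induction on $j$ using the real difference quotient $f^{(j)}(r) = \lim_{x \to r,\, x \in I} (f^{(j-1)}(x) - f^{(j-1)}(r))/(x-r)$, after noting that $I$ is open in $\bb{R}$ since $\mathcal{D}$ is open---is exactly the paper's proof. Your aside correctly flags that the slicker Reflection-Principle route is not directly available here because the lemma does not assume $\mathcal{D}$ is self-conjugate, which is precisely why the paper also uses the difference-quotient induction.
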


\begin{proof}
Proceed by induction on $j$: when $j=1$, note that, since $f$ is analytic on $\mathcal{D}$, it is \textit{holomorphic} (i.e., complex-differentiable) on $\mathcal{D}$. Thus,   
\begin{align*}
f'(r) := \lim_{z \rightarrow r} \frac{f(z) - f(r)}{z-r}
\end{align*}
exists for all $z \in \mathcal{D}$; in particular, 
\begin{align*}
f'(r) = \lim_{x \rightarrow r} \frac{f(x) - f(r)}{x - r},~x \in I, 
\end{align*}
and the conclusion that $f'(r) \in \bb{R}$ follows by the hypothesis that $f(x) \in \bb{R}$ for all $x \in I$.

Next, assume that the result holds when $j=k-1>1$. As above, note that $f^{(k)}(r)$ exists and 
 \begin{align*}
f^{(k)}(r) = \lim_{x \rightarrow r} \frac{f^{(k-1)}(x) - f^{(k-1)}(r)}{x - r},~x \in I 
\end{align*}
so that $f^{(k)}(r) \in \bb{R}$.
\end{proof}

\begin{theorem} \label{thm:mainresult}
Let $A \in \mat{n}{\bb{R}}$ and suppose that $A$ possesses the strong Perron-Frobenius property. If $f: \bb{C} \longrightarrow \bb{C}$ is an analytic function defined in a self-conjugate domain $\mathcal{D}$ containing $\sig{A}$, then $f(A)$ possesses the strong Perron-Frobenius property if and only if $f$ is Frobenius.
\end{theorem}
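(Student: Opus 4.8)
The plan is to mimic the proof of \hyp{mainresultdiag}{Theorem} almost verbatim, substituting the \textit{real} Jordan canonical form of \hyp{thm:rjcf}{Theorem} for the diagonalization and invoking \hyp{prop:rjcf_cor}{Proposition} to handle the (now possibly nontrivial) Jordan blocks; analyticity enters in exactly one place, namely to upgrade the statement ``$f$ is real on $\mathcal{D}\cap\bb{R}$'' to the statement about the derivatives $f^{(j)}$ that \hyp{prop:rjcf_cor}{Proposition} requires. I would begin by recording the structural consequences of $A$ possessing the strong Perron-Frobenius property: $\rho:=\sr{A}$ is a \emph{simple}, strictly modulus-dominant eigenvalue with a positive eigenvector $x$, so in \eqref{realjordform} the first block may be taken to be $\jordan{1}{\rho}=[\rho]$ and the first column of $R$ to be $x$, whence $R=\begin{bmatrix} x & R'\end{bmatrix}$ with $x>0$ and every eigenvalue $\lambda\in\sig{A}\setminus\{\rho\}$ satisfies $|\lambda|<\rho$; moreover $\rho\in\sig{A}\subseteq\mathcal{D}$ gives $\mathcal{D}\cap\bb{R}^+\neq\emptyset$ and $\mathcal{D}\cap\bb{R}\neq\emptyset$, so that both the notion ``Frobenius'' and the \hyp{thm:refprinc}{Theorem} are available on $\mathcal{D}$.

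For the ``if'' direction, suppose $f$ is Frobenius. By condition \ref*{item:selfconjugacy} and the \hyp{thm:refprinc}{Theorem}, $f$ is real-valued on $I:=\mathcal{D}\cap\bb{R}$; \hyp{lem:analytic}{Lemma} then gives $f^{(j)}(r)\in\bb{R}$ for all $r\in I$ and all $j$, and since each $f^{(j)}$ is itself analytic on the self-conjugate domain $\mathcal{D}$, a second application of the \hyp{thm:refprinc}{Theorem} yields $\overline{f^{(j)}(\lambda)}=f^{(j)}(\bar\lambda)$ for every $\lambda\in\mathcal{D}$, in particular for the complex eigenvalues $\lambda_{r+1},\dots,\lambda_{r+c}$. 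Hence \hyp{prop:rjcf_cor}{Proposition} applies to each block $C_{n_k}(\lambda_k)$ and presents $f(C_{n_k}(\lambda_k))$ as a \emph{real} matrix, while each $f(\jordan{n_k}{\lambda_k})$ with $\lambda_k$ real is real because its entries $f^{(j)}(\lambda_k)/j!$ lie in $\bb{R}$. Since a matrix function is invariant under similarity and distributes over direct sums, it follows --- exactly as in \eqref{fofAdiag} --- that $f(A)=R\,\tilde J_f\,\inv{R}$, where $\tilde J_f$ is the direct sum of $[f(\rho)]$, of $f(\jordan{n_k}{\lambda_k})$ for $2\le k\le r$, and of $f(C_{n_k}(\lambda_k))$ for $r+1\le k\le r+c$; in particular $f(A)$ is real. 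From $R e_1 = x$ and $\tilde J_f e_1 = f(\rho)e_1$ one reads off $f(A)x=f(\rho)x$, and $f(\rho)\in\bb{R}^+$ by the remark following the definition of a Frobenius function. Finally, the spectral mapping theorem gives $\sig{f(A)}=\{f(\lambda):\lambda\in\sig{A}\}$ with algebraic multiplicities, and condition \ref*{item:mod} forces $|f(\lambda)|<f(\rho)$ for every $\lambda\in\sig{A}\setminus\{\rho\}$; thus $\sr{f(A)}=f(\rho)>0$ is strictly modulus-dominant in $\sig{f(A)}$, carries the positive eigenvector $x$, and is simple, since no eigenvalue of $A$ other than $\rho$ is sent by $f$ to $f(\rho)$ and hence $f(\rho)$ has the same algebraic multiplicity in $f(A)$ that $\rho$ has in $A$, namely $1$. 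Therefore $f(A)$ possesses the strong Perron-Frobenius property.

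For the ``only if'' direction I would argue by contraposition exactly as in \hyp{mainresultdiag}{Theorem}: the failure of either defining property of a Frobenius function, once witnessed on $\sig{A}$, forces $f(A)$ either to be non-real --- a violation of \ref*{item:selfconjugacy} at a (necessarily real) eigenvalue of $A$ does this, because real eigenvalues of a real matrix have real generalized eigenvectors --- or to lose a simple, strictly modulus-dominant, positive eigenvalue --- a violation of \ref*{item:mod} at $\lambda\in\sig{A}$ gives $|f(\lambda)|\ge f(\rho)$ while $f(A)x=f(\rho)x$ with $x>0$. The eigenvalue and eigenvector bookkeeping is identical to the diagonalizable case, so the only genuinely new ingredient, and the step I expect to demand the most care, lies in the ``if'' direction: verifying that the hypothesis $\overline{f_\lambda^{(j)}}=f_{\bar\lambda}^{(j)}$ of \hyp{prop:rjcf_cor}{Proposition} actually holds. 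That is where analyticity --- through \hyp{lem:analytic}{Lemma} and the \hyp{thm:refprinc}{Theorem} --- supplies exactly what the weaker hypothesis of \hyp{mainresultdiag}{Theorem}, mere definedness of $f$ on $\sig{A}$, does not.
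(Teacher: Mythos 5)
Your proof is correct and follows essentially the same route as the paper's: pass to the real Jordan canonical form, use the \hyp{thm:refprinc}{Theorem} together with \hyp{lem:analytic}{Lemma} to obtain $\overline{f^{(j)}(\lambda)} = f^{(j)}(\bar\lambda)$ on $\mathcal{D}$, invoke \hyp{prop:rjcf_cor}{Proposition} to conclude $f(A)$ is real, and handle the converse by contraposition exactly as in \hyp{mainresultdiag}{Theorem}. You merely make explicit the eigenvalue, eigenvector, and multiplicity bookkeeping that the paper leaves implicit.
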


\begin{proof}
Suppose that $f$ is Frobenius. Following \hyp{thm:rjcf}{Theorem}, there exists an invertible matrix $R$ such that 
\begin{align*}
\inv{R} A R =
\begin{bmatrix}
\sr{A} & 															\\ 
	& \bigoplus_{k=2}^{r} J_{n_k} (\lambda_k) 	& 								\\ 
	&							& \bigoplus_{k = r + 1}^{r + c} C_{n_k} (\lambda_k)  
\end{bmatrix},															
\end{align*}
where 
\begin{align*}
R= 
\begin{bmatrix}
x & R'
\end{bmatrix},~x>0.
\end{align*}
Because $f$ is Frobenius, following \hyp{thm:refprinc}{Theorem}, $\overline{f(\lambda)} = f(\bar{\lambda})$ for all $\lambda \in \mathcal{D}$. Since $f$ is analytic, $f^{(j)}$ is analytic for all $j \in \bb{N}$ and, following \hyp{lem:analytic}{Lemma} $f^{(j)}(r) \in \bb{R}$ for all $r \in I$. Another application of \hyp{thm:refprinc}{Theorem} yields that $\overline{f^{(j)}(\lambda)} = f^{(j)}(\bar{\lambda})$ for all $\lambda \in \mathcal{D}$. Hence, following \hyp{prop:rjcf_cor}{Proposition}, the matrix 
\begin{align*}
f(A) =
R
\begin{bmatrix}
f(\sr{A}) 	& 																	\\ 
		& \bigoplus_{k=2}^{r} f(J_{n_k} (\lambda_k)) 	& 									\\ 
		&								& \bigoplus_{k = r + 1}^{r + c} f(C_{n_k}(\lambda_k))  
\end{bmatrix} \inv{R}															
\end{align*}
is real and possesses the strong Perron-Frobenius property.

The proof of the converse is identical to the proof of the converse of \hyp{mainresultdiag}{Theorem}. 
\end{proof}

\begin{corollary}
Let $A \in \mat{n}{\bb{R}}$ and suppose that $A$ is eventually positive. If $f: \bb{C} \longrightarrow \bb{C}$ is an analytic function defined in a self-conjugate domain $\mathcal{D}$ containing $\sig{A}$, then $f(A)$ is eventually positive if and only if $f$ is Frobenius.
\end{corollary}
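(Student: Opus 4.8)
The plan is to derive this corollary from \hyp{thm:mainresult}{Theorem} by means of the equivalence recorded in \hyp{evpos_thm}{Theorem}. Since $A$ is eventually positive, \hyp{evpos_thm}{Theorem} shows that both $A$ and $A^{\top}$ possess the strong Perron-Frobenius property; and, again by \hyp{evpos_thm}{Theorem}, eventual positivity of $f(A)$ is equivalent to both $f(A)$ and $f(A)^{\top}$ possessing the strong Perron-Frobenius property. Hence it suffices to characterize when both $f(A)$ and $f(A)^{\top}$ enjoy that property. Note also that $\sr{A} > 0$ lies in $\sig{A} \subseteq \mathcal{D}$, so $\mathcal{D} \cap \bb{R}^+ \neq \emptyset$ and the notion of $f$ being Frobenius is meaningful here.

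First I would record the identity $f(A^{\top}) = f(A)^{\top}$, valid for any primary matrix function (see, e.g., \cite[Theorem 1.13]{h2008}); together with $\sig{A^{\top}} = \sig{A} \subseteq \mathcal{D}$, this shows that $f$ is analytic in a self-conjugate domain containing $\sig{A^{\top}}$, so that \hyp{thm:mainresult}{Theorem} applies to $A^{\top}$ exactly as it does to $A$.

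Now suppose $f$ is Frobenius. Applying \hyp{thm:mainresult}{Theorem} to $A$ gives that $f(A)$ possesses the strong Perron-Frobenius property, and applying it to $A^{\top}$ gives that $f(A^{\top}) = f(A)^{\top}$ does as well; by \hyp{evpos_thm}{Theorem}, $f(A)$ is therefore eventually positive. Conversely, if $f(A)$ is eventually positive, then \hyp{evpos_thm}{Theorem} yields in particular that $f(A)$ possesses the strong Perron-Frobenius property, so $f$ is Frobenius by \hyp{thm:mainresult}{Theorem}.

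The argument is mostly bookkeeping once \hyp{thm:mainresult}{Theorem} and \hyp{evpos_thm}{Theorem} are available; the one point needing (minor) care is the interchange $f(A^{\top}) = f(A)^{\top}$ together with the observation that the domain hypotheses (self-conjugacy and containment of the spectrum) are inherited by $A^{\top}$ — both immediate from $\sig{A^{\top}} = \sig{A}$ and standard properties of matrix functions.
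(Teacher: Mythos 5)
Your proof is correct and follows essentially the same route as the paper's: the paper's one-line proof invokes \hyp{thm:mainresult}{Theorem} together with $f(A^{\top}) = (f(A))^{\top}$, implicitly relying on \hyp{evpos_thm}{Theorem}, and you have simply spelled out that bookkeeping in full.
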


\begin{proof}
Follows from \hyp{thm:mainresult}{Theorem} and the fact that $f(A^\top) = (f(A))^\top$ (\cite[Theorem 1.13(b)]{h2008}).
\end{proof}

\begin{rem}
{\rm Aside from the function $f(z) = |z|$, which is nowhere differentiable, every function listed in \hyp{table:frobfunctions}{Table} is analytic and Frobenius.}
\end{rem}

\section{Acknowledgements}

I gratefully acknowledge Hyunchul Park for discussions arising from this research and the anonymous referees for their helpful suggestions.

\bibliographystyle{abbrv}
\bibliography{laabib,crs}

\end{document}